\documentclass[12pt]{amsart}
\usepackage{amsthm,amsfonts,amsmath,amssymb,amsaddr,geometry,eufrak,bigints,eucal}
\usepackage{graphicx}
 \usepackage{enumitem}
 \usepackage{hyperref}
 \usepackage{etoolbox}
 \usepackage{doi}
 \usepackage{tikz}
 \usetikzlibrary{calc, arrows.meta, positioning}

 \hypersetup{colorlinks=true, linkcolor=blue, citecolor=red}
 \makeatletter
\patchcmd{\section}{\scshape}{\relax}{}{}
\patchcmd{\section}{\uppercase}{\relax}{}{}
\makeatother
\input xypic
\usepackage{url}
\newtheorem{theorem}{Theorem}[section]
\newtheorem{proposition}[theorem]{Proposition}

\newtheorem{corollary}[theorem]{Corollary}
\newtheorem{definition}[theorem]{Definition}
\newtheorem{remark}[theorem]{Remark}
\newtheorem{example}[theorem]{Example}

\begin{document}
\title[]{Four Point Analogue of \'Ciri\'c--Reich--Rus Type Mappings with Non-Unique Fixed Points}

\author{Irfan Ahmed$^1$, Shallu Sharma$^2$, Sahil Billawria$^{3,*}$}
\maketitle

\begin{center}
\footnotesize $^1$Department of Mathematics, University of Jammu, Jammu, India.\\
Email: irfanahmed0038@gmail.com\\
 $^2$Department of Mathematics, University of Jammu, Jammu, India.\\
Email: shallujamwal09@gmail.com\\
 $^3$Department of Mathematics, University of Jammu, Jammu, India.\\
Email: sahilbillawria2@gmail.com
\end{center}
\begin{abstract}
In this paper, we introduce and study a new class of mappings termed generalized quadrilateral \'Ciri\'c-Reich-Rus type mappings, which serve as a four-point analogue of \'Ciri\'c-Reich-Rus type mappings. We establish the existence and non-uniqueness of fixed points, with at most three fixed points possible for these mappings in complete metric spaces under the condition that the mapping has no periodic points of prime period $2$ and $3$. We determine that while these mappings are often discontinuous within their domain of definition, they must be continuous at their fixed points. By applying additional constraints, notably asymptotic regularity and continuity, we broaden the applicability of fixed-point theorems to include a broader class of mappings. At the end, we attain fixed point theorems for generalized quadrilateral \'Ciri\'c-Reich-Rus type mappings in metric spaces that are not necessarily complete. Several illustrative examples are provided to validate the theoretical results and demonstrate the independence of our new class from existing notions.\\
\\
\noindent  Mathematical Subject Classification: 47H10, 47H09, 54H25.\\
\\
\noindent Key words and phrases: Fixed point theorem, \'Ciri\'c--Reich--Rus type mapping, Quadrilateral, Asymptotic regularity
\end{abstract}
\section{Introduction and Preliminaries}

Fixed point theory is an important area of mathematics. It is widely used in differential equations, integral equations, optimisation and many branches of applied sciences. The basic problem is to decide whether a given map $T$ on a space $X$ has a point $x\in X$ with $Tx=x$. When $T$ is a contraction on a complete metric space, the classical Banach contraction principle \cite{Banach} guarantees the existence and uniqueness of such a point. This result has been extended by many researchers since its inception, by relaxing the contraction criterion or by exploring alternative contexts.

Among the notable generalizations are the results of Kannan \cite{Kannan} and Chatterjea \cite{Chatterjea}, who introduced contractive conditions that do not require the mapping to be continuous. These conditions are independent of Banach's contraction and, in fact, characterize completeness of the underlying metric space \cite{subra}. Later in 1971,  \'Ciri\'c \cite{Ciric}, Reich \cite{Reich1} and Rus \cite{Rus} combined the Banach, Kannan and Chatterjea type inequalities into a single unified condition known as the Ćirić-Reich-Rus (CRR) contraction that provides the fixed point for a self discontinuous mappings.
\begin{definition}[Reich \cite{Reich1}]
Let $T: X \to X$ be a mapping on a complete metric space $(X,d)$ with
\begin{equation}
d(Tx, Ty) \le a\,d(x,y) + b\,d(x,Tx) + c\,d(y,Ty), \tag{1.1}\label{1.1}
\end{equation}
where $a,b,c \ge 0$, $a+b+c<1$ and $x,y \in X$. Then $T$ has a unique fixed point.
\end{definition}

There are various classical results in the literature of fixed point theory. Key generalisations of fixed point results include weakening contractivity \cite{2012, 2003, 1977, 2021, 2020}, relaxing topological requirements or metric spaces \cite{1959, 1990, 2009}, and extending theorems to cope with multivalued mappings \cite{1969, 2011}. These modifications increase the applicability of fixed point theorems to a variety of mathematical contexts. Recently, a new geometric direction appeared. Petrov \cite{Petrov1} introduced mappings which contract the perimeter of triangles and proved a fixed point theorem under the assumption that the mapping has no periodic points of prime period 2. Such a mapping need not have a unique fixed point, in fact it can have at most two fixed points. Let us recall that.

\begin{definition}[Petrov \cite{Petrov1}]
Let $(X,d)$ be a metric space with $\left|X\right|\ge 3$. Then the mapping $T:X\to X$ is defined as contracting perimeters of triangles if there is an $\alpha \in [0,1)$ such that
\begin{equation}
d(Tp,Tq) + d(Tq,Tr) + d(Tr,Tp) \le \alpha \bigl(d(p,q) + d(q,r) + d(r,p)\bigr) \tag{1.2}
\end{equation}
for three pairwise distinct points $p,q,r\in X$.
\end{definition}

This idea was further extended by Petrov and Bisht \cite{PetrovBisht1} and by Bisht and Petrov \cite{BishtPetrov, BishtPetrovCRR} where three point analogues of Kannan, Chatterjea and Ćiric-Reich-Rus type contractions were introduced respectively. In each case the absence of periodic points of prime period 2 is essential and the number of fixed points is atmost 2. 
\begin{definition}[Bisht--Petrov \cite{BishtPetrovCRR}]
Let $(X,d)$ be a metric space with $|X| \ge 3$. We shall say that $T:X\to X$ is a generalized \'CRR type mapping on $X$ if there exist $\alpha,\lambda \ge 0$, $2\alpha + \frac{3\lambda}{2} < 1$ such that the inequality
\begin{equation}
\begin{aligned}
&d(Tp,Tq) + d(Tq,Tr) + d(Tp,Tr) \\
&\le \alpha\bigl(d(p,q) + d(q,r) + d(r,p)\bigr) + \lambda\bigl(d(p,Tp) + d(q,Tq) + d(r,Tr)\bigr)
\end{aligned} \tag{1.3}\label{1.5}
\end{equation}
holds for all three pairwise distinct points $p,q,r\in X$.
\end{definition}
In 2025, Petrov \cite{petrov2025} extended this direction of research to mappings which contract the total pairwise distances between $k$ points in metric spaces. Several additional works that utilise multipoint Banach contractions are available in \cite{bisht2025, bishtpetrov2025,  petrovsamet} including those that were developed within the semimetric space framework based on the triangle functions introduced by Bessenyei and Páles \cite{bessen}.

More recently, Banerjee et al. \cite{Banerjeeetal} introduced a four-point analogue of Banach, Kannan, and Chatterjea type contractions.
\begin{definition}[Banach-type]
Let $(X,d)$ be a metric space with $\left|X\right|\ge 4$. Then a mapping $T \colon X \to X$ is said to be a perimetric contraction on quadrilaterals in $X$ if there is an $\alpha \in [0, 1)$ such that
\begin{equation}
d(Tp, Tq) + d(Tq, Tr) + d(Tr, Ts) + d(Ts, Tp) \leq \alpha \big( d(p, q) + d(q, r) + d(r, s) + d(s, p) \big)\tag{1.4}
\end{equation}
for all distinct points $p, q, r, s \in X$.
\end{definition}
\begin{definition}[Kannan-type]
Let $(X,d)$ be a metric space with $\left|X\right|\ge 4$. Then a mapping $T \colon X \to X$ is said to be a Kannan-type perimetric contraction on quadrilaterals in $X$ if there is an $\alpha \in \left[0, \frac{1}{2}\right)$ such that
\begin{equation}
 d(Tp, Tq) + d(Tq, Tr) + d(Tr, Ts) + d(Ts, Tp)\leq \alpha \big( d(p, Tp) + d(q, Tq) + d(r, Tr) + d(s, Ts) \big)\tag{1.5}
\end{equation}
for all distinct points $p, q, r, s \in X$.
\end{definition}

\begin{definition}[Chatterjea-type]
Let $(X,d)$ be a metric space with $\left|X\right|\ge 4$. Then a mapping $T \colon X \to X$ is said to be a Chatterjea-type perimetric contraction on quadrilaterals in $X$ if there is an $\alpha \in \left[0, \frac{1}{7}\right)$ such that
\begin{align*}
&d(Tp, Tq) + d(Tq, Tr) + d(Tr, Ts) + d(Ts, Tp)\\
&\quad\leq \alpha \big( d(p, Tq) + d(q, Tp) + d(r, Ts) + d(s, Tr)\\
&\quad\quad+d(p,Ts)+d(s,Tp)+d(q, Tr)+d(r,Tq) \big)\tag{1.6}
\end{align*}
for all distinct points $p, q, r, s \in X$.
\end{definition}

Motivated by these developments, in the next definition, we introduce the four-point analogue of the CRR-type mappings.
\begin{definition}
Let $(X,d)$ be a metric space with at least four points. A mapping $T:X\to X $ is called a generalized quadrilateral  Ćiric-Reich-Rus type (GQCRR) mapping if there exist $\alpha,\lambda \ge 0$ with 
\[
3\alpha+2\lambda<1
\]
such that for all four pairwise distinct points $p,q,r,s\in X$,
\begin{align*}
&d(Tp,Tq)+d(Tq,Tr)+d(Tr,Ts)+d(Ts,Tp)\\
&\quad \le \alpha\big(d(p,q)+d(q,r)+d(r,s)+d(s,p)\big)\\
&\quad\quad+\lambda\big(d(p,Tp)+d(q,Tq)+d(r,Tr)+d(s,Ts)\big).\tag{1.7}\label{0.1}
\end{align*}
\end{definition}
Our main aim  is to prove fixed point theorems for this new class of mappings. We show that in a complete metric space, such a mapping has a fixed point provided it has no periodic points of prime period $2$ and $3$. Moreover, the number of fixed points is at most three. We further assess the regularity properties of these mappings, establishing that even if they are discontinuous across their domain, they necessarily exhibit continuity at fixed point.

We further extended our findings in two ways. In order to reduce the parameter constraints while ensuring the existence of fixed points, we first examine asymptotically regular GQCRR mappings. Secondly, we derive fixed point theorems in incomplete metric spaces. In one scenario, we assume continuity at the limit point and in another, we assume that the mapping is continuous on the entire space and that the contraction condition holds on a dense subset.

Throughout the paper, we have denoted $(X,d)$ as metric space, $|X|$ as the cardinality of the set $X$, $\mathbb{N}$ as the set of natural numbers, and $\operatorname{Fix}(T)$ as the collection of all fixed points of $T$. Let $T:X\to X$ be a mapping. A point $p \in X$ is said to be a fixed point of $T$ if $T(p)=p$; $p$ is said to be a periodic point of period $n$ if $T^n p = p$. The least positive integer $n$ for which $T^n p = p$ is called the prime period of $p$ \cite{periodicpoint}.

\section{Some Properties of Generalized Quadrilateral CRR-Type Mappings}

\begin{proposition}\label{prop1}
Let $(X, d)$ be a metric space with $|X| \ge 4$. If $T:X \rightarrow X$ is a CRR type mapping satisfying $3a + 2(b+c) < 1$, then $T$ is a generalized quadrilateral CRR type mapping.
\end{proposition}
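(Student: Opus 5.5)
The plan is a direct summation argument. We apply the CRR inequality \eqref{1.1} to each of the four ``sides'' of the quadrilateral $p\to q\to r\to s\to p$. The orientation of each application is chosen so that every vertex occurs exactly once as the first argument and exactly once as the second argument. Adding the four inequalities should then give \eqref{0.1} with $\alpha=a$ and $\lambda=b+c$.

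Concretely, fix four pairwise distinct points $p,q,r,s\in X$. Apply \eqref{1.1} to the ordered pairs $(x,y)=(p,q)$, $(q,r)$, $(r,s)$ and $(s,p)$:
\begin{align*}
d(Tp,Tq)&\le a\,d(p,q)+b\,d(p,Tp)+c\,d(q,Tq),\\
d(Tq,Tr)&\le a\,d(q,r)+b\,d(q,Tq)+c\,d(r,Tr),\\
d(Tr,Ts)&\le a\,d(r,s)+b\,d(r,Tr)+c\,d(s,Ts),\\
d(Ts,Tp)&\le a\,d(s,p)+b\,d(s,Ts)+c\,d(p,Tp).
\end{align*}
When these are added, the $a$-terms give $a$ times the perimeter $d(p,q)+d(q,r)+d(r,s)+d(s,p)$. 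The $b$-terms give $b$ times $d(p,Tp)+d(q,Tq)+d(r,Tr)+d(s,Ts)$. The cyclic shift in the $c$-terms gives the same sum multiplied by $c$. Hence the left side of \eqref{0.1} is bounded by
\[
a\bigl(d(p,q)+d(q,r)+d(r,s)+d(s,p)\bigr)+(b+c)\bigl(d(p,Tp)+d(q,Tq)+d(r,Tr)+d(s,Ts)\bigr).
\]

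Finally, set $\alpha=a\ge 0$ and $\lambda=b+c\ge 0$. The hypothesis $3a+2(b+c)<1$ is exactly the requirement $3\alpha+2\lambda<1$, and $|X|\ge 4$ is assumed, so $T$ is a GQCRR mapping. There is no genuine obstacle here. The only point requiring care is the cyclic orientation of the four pairs, which guarantees that $d(p,Tp)$ and the other displacement terms each receive total coefficient exactly $b+c$, rather than some vertices collecting $2b$ or $2c$.
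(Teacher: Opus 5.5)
Your proof is correct and is essentially the paper's argument. The paper likewise applies \eqref{1.1} to the cyclically oriented pairs $(p,q),(q,r),(r,s),(s,p)$, sums the four inequalities, and sets $\alpha=a$, $\lambda=b+c$; your remark that the cyclic orientation gives each displacement term total coefficient $b+c$ is exactly what makes this work.
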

\begin{proof}
Let $p,q,r,s\in X$ be any four pairwise distinct points. Apply the inequality (\ref{1.1}) to  the pairs $(q,r), (r,s)$ and $(s,p)$:
\begin{align*}
d(Tq,Tr) &\le a\,d(q,r) + b\,d(q,Tq) + c\,d(r,Tr),  \tag{2.1}\label{2.1}\\ 
d(Tr,Ts) &\le a\,d(r,s) + b\,d(r,Tr) + c\,d(s,Ts), \tag{2.2}\label{2.2}\\
d(Ts,Tp) &\le a\,d(s,p) + b\,d(s,Ts) + c\,d(p,Tp). \tag{2.3}\label{2.3}
\end{align*}
By summing the left-hand and right-hand sides of inequalities (\ref{1.1}), (\ref{2.1}), (\ref{2.2}) and (\ref{2.3}), we obtain
\begin{align*}
&d(Tp,Tq)+d(Tq,Tr)+d(Tr,Ts)+d(Ts,Tp) \\
&\quad \le a\big(d(p,q)+d(q,r)+d(r,s)+d(s,p)\big) \\
&\quad \quad + (b+c)\big(d(p,Tp)+d(q,Tq)+d(r,Tr)+d(s,Ts)\big).\tag{2.4}\label{2.4}
\end{align*}
Set \(\alpha := a\) and \(\lambda := b+c\). Then (\ref{2.4}) is precisely the inequality (\ref{0.1}).
\end{proof}

\begin{example}
Let $X = [0,1]$ be equipped with the Euclidean metric $d$. Let $T:X \rightarrow X$ be defined as:
\begin{equation*}
Tx = 
\begin{cases} 
\frac{p}{8}, & \text{if } p \in [0,1), \\
\frac{1}{16}, & \text{if } p = 1.
\end{cases}
\end{equation*}

Clearly, $\lim_{p \to 1^-} Tp = \frac{1}{8} \neq T(1)$, making the mapping discontinuous at $p = 1$. We will first show that $T$ is a  ĆRR-type mapping with the coefficients $a = \frac{1}{8}$ and $b = c = \frac{1}{16}$. We must verify that (\ref{1.1}) holds for all $p, q \in [0,1]$.

\textbf{Case 1:} Let $p, q \in [0,1)$, applying (\ref{1.1}) yields\\
\begin{align*}
\left|\frac{p}{8} - \frac{q}{8}\right| &\le \frac{1}{8}|p - q| + \frac{1}{16}\left(p - \frac{p}{8}\right) + \frac{1}{16}\left(q - \frac{q}{8}\right) \\
\left|\frac{p}{8} - \frac{q}{8}\right| &\le \frac{1}{8}|p - q| + \frac{7p}{128} + \frac{7q}{128}.
\end{align*}
Since $p, q \ge 0$, the additional terms on the right side are non-negative, and the inequality trivially holds.

\textbf{Case 2:} Without loss of generality, let $p \in [0,1)$ and $q = 1$, then (\ref{1.1}) yields\\
\begin{equation*}
\left|\frac{p}{8} - \frac{1}{16}\right| \le \frac{1}{8}(1 - p) + \frac{1}{16}\left(p - \frac{p}{8}\right) + \frac{1}{16}\left(1 - \frac{1}{16}\right).
\end{equation*}
Multiply the entire inequality by $256$, we obtain
\begin{align*}
16|2p - 1| &\le 32(1 - p) + 16\left(\frac{7p}{8}\right) + 16\left(\frac{15}{16}\right) \\
16|2p - 1| &\le 32 - 32p + 14p + 15 \\
16|2p - 1| &\le 47 - 18p.
\end{align*}
\begin{itemize}
    \item If $p \in \left[\frac{1}{2}, 1\right)$, then $16(2p - 1) = 32p - 16$. The inequality becomes $32p - 16 \le 47 - 18p \implies 50p \le 63$, which holds true for all $p \le 1.26$ (and thus for all $p < 1$).
    \item If $p \in \left[0, \frac{1}{2}\right)$, then $16(1 - 2p) = 16 - 32p$. The inequality becomes $16 - 32p \le 47 - 18p \implies 0 \le 31 + 14p$, which holds true for all $p \ge 0$.
\end{itemize}
Thus, (\ref{1.1}) holds for all $p, q \in [0,1]$ and  $T$ is a CRR-type mapping. Further, 
\begin{equation*}
3a + 2(b+c) = 3\left(\frac{1}{8}\right) + 2\left(\frac{1}{16} + \frac{1}{16}\right) = \frac{3}{8} + \frac{2}{8} = \frac{5}{8}<1
\end{equation*}
By Proposition \ref{prop1}, $T$ is a generalized quadrilateral CRR-type mapping.
\end{example}
\begin{example}
Let $X=\{p,q,r,s\}$. Define a metric $d$ on $M$ by the following distance matrix:
\[
\begin{array}{c|cccc}
d & p & q & r & s\\ \hline
p & 0 & 1 & 10 & 10\\
q & 1 & 0 & 10 & 10\\
r & 10 & 10 & 0 & 1\\
s & 10 & 10 & 1 & 0
\end{array}
\]
Then $(X,d)$ is a metric space. Define the mapping $T:X\to X$by
\[
T(p)=p,\qquad T(q)=q,\qquad T(r)=p,\qquad T(s)=p.
\tag{2.5}\label{2.5}
\]
\medskip
\noindent \textbf{\(T\) is a GQCRR-type mapping:} We claim that $T$ satisfies (\ref{0.1}) with \(\alpha=0\) and \(\lambda=\frac{1}{3}\). Indeed, \(3\cdot0+2\cdot\frac{1}{3}=\frac{2}{3}<1\).
Using (\ref{2.5}), we compute the left‑hand side of inequality (\ref{0.1}):
\[
\begin{aligned}
\text{LHS} &= d(Tp,Tq)+d(Tq,Tr)+d(Tr,Ts)+d(Ts,Tp) \\
&= d(p,q)+d(q,r)+d(p,p)+d(p,p) \\
&= 1 + 1 + 0 + 0 = 2.
\end{aligned}
\]
The right‑hand side (with \(\alpha=0\)) is
\[
\text{RHS} = \lambda\big(d(p,Tp)+d(q,Tq)+d(r,Tr)+d(s,Ts)\big).
\]
Using (\ref{2.5}), we have 
\[
d(p,Tp)=d(p,p)=0,\quad d(q,Tq)=d(q,q)=0,\quad d(r,Tr)=d(r,p)=10,\quad d(s,Ts)=d(s,p)=10.
\]
Thus
\[
\text{RHS} = \frac{1}{3}(0+0+10+10)=\frac{20}{3}.
\]
Since \(2 \le \frac{20}{3}\), the inequality (\ref{0.1}) holds.

\medskip
\noindent \textbf{\(T\) is not a CRR-type mapping:} Suppose for contradiction, that \(T\) satisfies (\ref{1.1}) for some \(a,b,c\ge0\) with \(a+b+c<1\). Consider the pair $(p,q)$. Since \(Tp=p\) and \(Tq=q\), we have
\[
d(Tp,Tq)=d(p,q)=1,
\]
and the right‑hand side of (\ref{1.1}) becomes
\[
a\,d(p,q) + b\,d(p,Tp) + c\,d(q,Tq)
= a\cdot 1 + b\cdot 0 + c\cdot 0 = a.
\]
Thus (\ref{1.1}) would imply
\[
1 \le a.
\]
But \(a+b+c<1\) forces \(a<1\), a contradiction.
\end{example}

\begin{example}
Let $X=[0,1]$ be equipped with the Euclidean metric $d$. Define $T:X\to X$ by
\[
T(p)=\frac{p}{2} \qquad \text{for all } p\in[0,1].
\]

\medskip
\noindent \textbf{\(T\) is a CRR-type mapping:} Clearly (\ref{1.1}) holds with \(a=\frac{1}{2}\), \(b=0\), \(c=0\), \(a+b+c=\frac{1}{2}<1\) for any \(p,q\in[0,1]\):
\[
\frac{1}{2}|p-q|\le\frac{1}{2}|p-q|+0\left|p-\frac{1}{2}p\right|+0\left|q-\frac{1}{2}q\right|.
\]

\medskip
\noindent \textbf{\(T\) is not a GQCRR-type mapping:} Take $
p=0, q=\frac{1}{3}, r=\frac{2}{3}, s=1.$
Compute the left‑hand side of inequality (\ref{0.1}):
\[
\begin{aligned}
L &= d(Tp,Tq)+d(Tq,Tr)+d(Tr,Ts)+d(Ts,Tp) \\
&= d\left(0,\frac{1}{6}\right) + d\left(\frac{1}{6},\frac{1}{3}\right) + d\left(\frac{1}{3},\frac{1}{2}\right) + d\left(\frac{1}{2},0\right) \\
&= \frac{1}{6} + \frac{1}{6} + \frac{1}{6} + \frac{1}{2}=1.
\end{aligned}
\]

Now compute the right‑hand side of (\ref{0.1}). First, the sum of pairwise distances:
\[
d(p,q)+d(q,r)+d(r,s)+d(s,p)
= \frac{1}{3}+\frac{1}{3}+\frac{1}{3}+1 = 2.
\tag{2.6}\label{2.6}
\]

Next, the sum of self‑distances:
\[
\begin{aligned}
d(p,Tp) &= d(0,0)=0,\\
d(q,Tq) &= d\left(\frac{1}{3},\frac{1}{6}\right) = \frac{1}{6},\\
d(r,Tr) &= d\left(\frac{2}{3},\frac{1}{3}\right) = \frac{1}{3},\\
d(s,Ts) &= d\left(1,\frac{1}{2}\right) = \frac{1}{2}.
\end{aligned}
\]
Thus
\[
d(p,Tp)+d(q,Tq)+d(r,Tr)+d(s,Ts)
= 0 + \frac{1}{6} + \frac{1}{3} + \frac{1}{2} = 1.
\tag{2.7}\label{2.7}
\]

Using (\ref{2.6}) and (\ref{2.7}), the right‑hand side of (\ref{0.1}) becomes
\[
R = \alpha \cdot 2 + \lambda \cdot 1 = 2\alpha + \lambda.
\]

For (\ref{0.1}) to hold, we would need
\[
L \le R \quad\Longrightarrow\quad 1 \le 2\alpha + \lambda.\tag{2.8}\label{2.8}
\]

However, the GQCRR coefficient condition requires
\[
3\alpha + 2\lambda < 1.
\tag{2.9}\label{2.9}
\]
We claim that (\ref{2.9}) implies \(2\alpha+\lambda < 1\). Indeed, from (\ref{2.9}) we have
\[
\alpha < \frac{1-2\lambda}{3} \quad (\text{since } \alpha\ge0).
\]
Then
\[
2\alpha+\lambda < \frac{2(1-2\lambda)}{3} + \lambda
= \frac{2-4\lambda+3\lambda}{3} = \frac{2-\lambda}{3} \le \frac{2}{3} < 1,
\]
because \(\lambda\ge0\). Hence \(2\alpha+\lambda < 1\), which contradicts (\ref{2.8}).
\end{example}

The preceding two examples demonstrate that the classes of CRR-type mappings and generalized quadrilateral CRR-type mappings are independent of one another.
\begin{example}
 Let \(X = \{p,q,r,s\}\) be a metric space with metric $d$ defined as follows:
\[
\begin{aligned}
&d(p,q)=1,\quad d(q,r)=1,\quad d(r,p)=1,\\
&d(q,s)=4,\quad d(s,p)=4,\quad d(r,s)=4.
\end{aligned}
\tag{2.10}\label{2.10}
\]
Define the mapping \(T:X\to X\) by
\[
Tp=p,\qquad Tq=q,\qquad Tr=q,\qquad Ts=q.
\tag{2.11}\label{2.11}
\]

\medskip
\noindent \textbf{\(T\) is a GQCRR-type mapping:} Choose \(\alpha = \frac{1}{5}\) and \(\lambda = \frac{1}{10}\). Then
\[
3\alpha + 2\lambda = 3\left(\frac{1}{5}\right) + 2\left(\frac{1}{10}\right) = \frac{3}{5} + \frac{1}{5} = \frac{4}{5} < 1,
\]

Now, we verify the inequality (\ref{0.1}) for \(\{p,q,r,s\}\). The left‑hand side is
\[
\begin{aligned}
\text{LHS} &= d(Tp,Tq) + d(Tq,Tr) + d(Tr,Ts) + d(Ts,Tp) \\
&= d(p,q) + d(q,q) + d(q,q) + d(q,p) \\
&= 1 + 0 + 0 + 1 = 2.
\end{aligned}
\]
The right‑hand side is
\[
\begin{aligned}
\text{RHS} &= \alpha\big(d(p,q)+d(q,r)+d(r,s)+d(s,p)\big) \\
&\quad + \lambda\big(d(p,Tp)+d(q,Tq)+d(r,Tr)+d(s,Ts)\big).
\end{aligned}
\]
Using (\ref{2.10}) and (\ref{2.11}), we have
\[
d(p,q)=1,\quad d(q,r)=1,\quad d(r,s)=4,\quad d(s,p)=4,
\]
Also,
\[
d(p,Tp)=d(p,p)=0,\quad d(q,Tq)=d(q,q)=0,\quad d(r,Tr)=d(r,q)=1,\quad d(s,Ts)=d(s,q)=4.
\]
Therefore,
\[
\text{RHS} = \frac{1}{5}(10) + \frac{1}{10}(5) = 2 + 0.5 = 2.5.
\]
Since \(2 \le 2.5\), the inequality (\ref{0.1}) holds. Thus \(T\) is a GQCRR-type mapping.

\medskip
\noindent \textbf{\(T\) is not a GCRR-type mapping:}
Suppose for contradiction, that \(T\) is a GCRR-type mapping on \(\{p,q,r\}\). Then there exist \(\alpha',\lambda'\ge0\) with \(2\alpha'+\frac{3}{2}\lambda'<1\) such that 
\[
d(Tp,Tq)+d(Tq,Tr)+d(Tr,Tp)
\le \alpha'\big(d(p,q)+d(q,r)+d(r,p)\big)
+ \lambda'\big(d(p,Tp)+d(q,Tq)+d(r,Tr)\big).
\tag{2.12}\label{2.12}
\]
Using (\ref{2.11}), we have
\[
\begin{aligned}
d(Tp,Tq) &= d(p,q)=1,\\
d(Tq,Tr) &= d(q,q)=0,\\
d(Tr,Tp) &= d(q,p)=1.
\end{aligned}
\]
So the left‑hand side of (\ref{2.12}) is
\[
1+0+1=2.
\]
Using \(d(p,q)=d(q,r)=d(r,p)=1\) and $d(p,Tp)=d(p,p)=0, d(q,Tq)=d(q,q)=0, d(r,Tr)=d(r,q)=1,$ the right-hand side of (\ref{2.12}) becomes $3\alpha' + \lambda'$. To hold the inequality (\ref{2.12}), we need $2 \le 3\alpha' + \lambda'$, which is not possible. Hence \(T\) is not a GCRR-type mapping.
\end{example}

\begin{proposition}\label{prop2}
Let $(X,d)$ be a metric space with at least four points, and let $T:X\to X$ be a GQCRR-type mapping with some $\alpha,\lambda \ge 0$ satisfying $3\alpha + 2\lambda < 1$. If $p$ is an accumulation point of $X$ and $T$ is continuous at $p$, then the inequality
\[
d(Tp, Tq) \le \alpha\, d(p, q) + \frac{3\lambda}{2}\, d(p, Tp) + \frac{\lambda}{2}\, d(q, Tq)
\tag{2.13}\label{2.13}
\]
holds for every $q \in X$.
\end{proposition}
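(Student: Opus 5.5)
The plan is to freeze $p$ and $q$, let the other two vertices $r,s$ of the quadrilateral in (\ref{0.1}) tend to $p$, and use continuity of $T$ at $p$ to pass to the limit. The resulting inequality is exactly twice (\ref{2.13}).

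First, the trivial case. If $q=p$, the left-hand side of (\ref{2.13}) is $d(Tp,Tp)=0$, so the inequality holds. Assume from now on that $q\neq p$.

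Next, construct two sequences converging to $p$. Since $p$ is an accumulation point of $X$, every ball $B(p,1/n)$ contains infinitely many points of $X$. For each $n$ choose
\[
r_n\in B(p,1/n)\setminus\{p,q\},\qquad s_n\in B(p,1/n)\setminus\{p,q,r_n\}.
\]
Then $p,q,r_n,s_n$ are four pairwise distinct points, and $r_n\to p$, $s_n\to p$. Because $T$ is continuous at $p$, we also have $Tr_n\to Tp$ and $Ts_n\to Tp$.

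Now apply (\ref{0.1}) to the quadruple $(p,q,r_n,s_n)$ in this cyclic order:
\begin{align*}
&d(Tp,Tq)+d(Tq,Tr_n)+d(Tr_n,Ts_n)+d(Ts_n,Tp)\\
&\quad\le \alpha\big(d(p,q)+d(q,r_n)+d(r_n,s_n)+d(s_n,p)\big)\\
&\quad\quad+\lambda\big(d(p,Tp)+d(q,Tq)+d(r_n,Tr_n)+d(s_n,Ts_n)\big).
\end{align*}
Let $n\to\infty$, using continuity of the metric.
\begin{itemize}
\item The left-hand side tends to $2\,d(Tp,Tq)$.
\item The $\alpha$-term tends to $\alpha\big(d(p,q)+d(q,p)+0+0\big)=2\alpha\, d(p,q)$.
\item Both $d(r_n,Tr_n)$ and $d(s_n,Ts_n)$ tend to $d(p,Tp)$, so the $\lambda$-term tends to $\lambda\big(3d(p,Tp)+d(q,Tq)\big)$.
\end{itemize}
This gives
\[
2\,d(Tp,Tq)\le 2\alpha\, d(p,q)+3\lambda\, d(p,Tp)+\lambda\, d(q,Tq).
\]
Dividing by $2$ yields (\ref{2.13}).

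The only delicate point is choosing $r_n,s_n$ pairwise distinct from each other and from $p,q$, so that (\ref{0.1}) is applicable. This is exactly where the accumulation-point hypothesis is used. The order of the vertices matters too: placing $p$ and $q$ adjacent, with the approximating points $r_n,s_n$ filling the other two positions, makes the edges $qr_n$ and $s_np$ both tend to $d(p,q)$. That is what produces the coefficients $\alpha$, $\tfrac{3\lambda}{2}$ and $\tfrac{\lambda}{2}$ in (\ref{2.13}).
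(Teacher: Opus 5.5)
Your proof is correct and follows the paper's route. Fix $q\neq p$, apply (1.7) to $p,q,r_n,s_n$ with $r_n,s_n\to p$, and let $n\to\infty$ using continuity of $T$ at $p$; this yields twice (2.13). Your explicit choice of $r_n,s_n$ from shrinking balls fills in a detail the paper only asserts. Your closing claim that the vertex order matters is not accurate, since every cyclic ordering of the four points gives the same limit ($q$'s two neighbours both tend to $p$), but this does not affect the argument.
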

\begin{proof}
Let $p$ be an accumulation point, and let $q\in X$. If $p = q$, then the inequality (\ref{2.13}) is trivial. Assume $p \ne q$.
Since $p$ is an accumulation point, there exist two sequences $\{r_n\}$ and $\{s_n\}$ in $X$ such that $r_n \to p$ and $s_n \to p$ as $n \to \infty$ and the points $p, q, r_n, s_n$ are pairwise distinct for every $n\in\mathbb{N}$. Apply the inequality (\ref{0.1}) to these four points $p, q, r_n$ and $s_n$, we obtain
\[
\begin{aligned}
&d(Tp, Tq) + d(Tq, Tr_n) + d(Tr_n, Ts_n) + d(Ts_n, Tp) \\
&\quad \le \alpha \Big(d(p, q) + d(q, r_n) + d(r_n, s_n) + d(s_n, p)\Big) \\
&\quad \quad + \lambda \Big(d(p, Tp) + d(q, Tq) + d(r_n, Tr_n) + d(s_n, Ts_n)\Big).
\end{aligned}
\tag{2.14}\label{2.14}
\]
Since $r_n \to p$, $s_n \to p$, and $T$ is continuous at $p$, we have $Tr_n \to Tp$ and $Ts_n \to Tp$.
Therefore, using the continuity of metric, we have $d(r_n, Tr_n) \rightarrow d(p, Tp)$ and $d(s_n, Ts_n) \rightarrow d(p, Tp)$.
Taking $n \to \infty$ on both sides of (\ref{2.14}), we obtain
\[
d(Tp, Tq) + d(Tq, Tp)\le \alpha \big(d(p, q) + d(q, p)\big) + \lambda \big(d(p, Tp) + d(q, Tq) + d(p, Tp) + d(p, Tp)\big),
\]
which is equivalent to the desired inequality (\ref{2.13}).
\end{proof}
\begin{corollary}
Let $(X,d)$ be a metric space with atleast four points, and let $T:X\to X$ be a continuous GQCRR-type mapping with some $\alpha,\lambda \ge 0$ and assume that every point of $X$ is an accumulation point. Then $T$ is a CRR-type mapping.\end{corollary}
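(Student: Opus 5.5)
The plan is to apply Proposition \ref{prop2} at every point of $X$. Since $T$ is continuous everywhere and every point of $X$ is an accumulation point, the hypotheses of Proposition \ref{prop2} hold at each $p\in X$. Hence for all $p,q\in X$
\[
d(Tp,Tq)\le \alpha\, d(p,q)+\tfrac{3\lambda}{2}\,d(p,Tp)+\tfrac{\lambda}{2}\,d(q,Tq).
\]
This is already an inequality of the form (\ref{1.1}), with $a=\alpha$, $b=\frac{3\lambda}{2}$ and $c=\frac{\lambda}{2}$.

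Next I would check the coefficient condition $a+b+c<1$. Here $a+b+c=\alpha+2\lambda$, and since $\alpha\ge 0$,
\[
\alpha+2\lambda\le 3\alpha+2\lambda<1.
\]
So $T$ satisfies (\ref{1.1}) with admissible constants.

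There is an optional symmetrization step: swap the roles of $p$ and $q$ and average the two inequalities. This gives the symmetric version with $b=c=\lambda$ and the same sum $a+b+c=\alpha+2\lambda<1$. Either version suffices.

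The only point needing care is to confirm that Proposition \ref{prop2} applies to an arbitrary pair $p\neq q$. The sequences $r_n,s_n$ must converge to $p$ while staying distinct from $p$, from $q$ and from each other. This is guaranteed because $p$ is an accumulation point, so every neighbourhood of $p$ contains infinitely many points. No other obstacle is expected: the corollary is essentially a direct restatement of Proposition \ref{prop2} together with the arithmetic $\alpha+2\lambda<1$.
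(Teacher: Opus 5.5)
Your proposal is correct and essentially matches the paper's proof, which applies Proposition \ref{prop2} to every pair, symmetrizes (your optional step) to obtain $b=c=\lambda$, and checks $\alpha+2\lambda<1$. Your observation that the unsymmetrized inequality with $b=\frac{3\lambda}{2}$, $c=\frac{\lambda}{2}$ already has the form (\ref{1.1}) is also valid, since (\ref{1.1}) allows $b\neq c$ and the coefficient sum $\alpha+2\lambda$ is the same.
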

\begin{proof}
Since $T$ is continuous everywhere and every point of $X$ is an accumulation point, we may apply Proposition \ref{prop2} to any $p,q\in X$. Interchanging $p$ and $q$ in (\ref{2.13}), we obtain:

\[
d(Tq,Tp) \le \alpha\, d(q,p) + \frac{3\lambda}{2}\, d(q,Tq) + \frac{\lambda}{2}\, d(p,Tp).
\tag{2.15}\label{2.15}
\]
 Adding (\ref{2.13}) and (\ref{2.15}) and dividing by $2$, we obtain
\[
d(Tp,Tq) \le \alpha\,d(p,q) + \lambda\,d(p,Tp) + \lambda\,d(q,Tq).
\]
Since $3\alpha+2\lambda<1$, $\alpha\ge0$, we have

\[
\alpha + 2\lambda < \alpha + (1-3\alpha) = 1 - 2\alpha \le 1.
\]
Thus $T$ is a CRR-type mapping. 
\end{proof}
\section{Main Results}

\begin{theorem}\label{theorem3.1}
Let $(X, d)$ be a complete metric space with $|X| \ge 4$, and let $T: X \to X$ be a self-mapping satisfying the following two conditions:
\begin{enumerate}
    \item[(i)] $T$ does not possess periodic points of prime period $2$ and prime period $3$, i.e., $T^2 p \ne p$ and $T^3 p \ne p$ for all $p \in X$ such that $Tp \ne p$.
     \item[(ii)]$T$ is a generalized quadrilateral CRR-type mapping on $X$.
    \end{enumerate}
    Then $T$ has a fixed point in $X$. Moreover, the number of fixed points is at most three.
\end{theorem}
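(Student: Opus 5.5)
We follow the Picard-iteration scheme used by Petrov and by Bisht--Petrov for the three-point case. Fix $x_0\in X$ and set $x_{n}=Tx_{n-1}$. If some $x_n$ is a fixed point we are done, so assume $x_{n+1}\ne x_n$ for all $n$. The first step is to show that any four consecutive iterates $x_n,x_{n+1},x_{n+2},x_{n+3}$ are pairwise distinct. Consecutive terms differ by assumption. If $x_{n+2}=x_n$, then $x_n$ is a periodic point of prime period $2$. If $x_{n+3}=x_n$, then $x_n$ is a periodic point of period $3$, and its prime period is $1$ or $3$; prime period $1$ is excluded because $x_{n+1}\ne x_n$. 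Both situations are ruled out by hypothesis (i). In particular $x_{n+1}\ne x_{n+3}$ as well, which is the same statement shifted by one.

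Next we apply (\ref{0.1}) to $p=x_n$, $q=x_{n+1}$, $r=x_{n+2}$, $s=x_{n+3}$. Write $d_k=d(x_k,x_{k+1})$ and $P_n=d_n+d_{n+1}+d_{n+2}+d(x_{n+3},x_n)$. The left-hand side is
\[
d_{n+1}+d_{n+2}+d_{n+3}+d(x_{n+4},x_{n+1}) = P_{n+1}.
\]
The self-distance sum is $d_n+d_{n+1}+d_{n+2}+d_{n+3}$. We use $d_{n+3}\le P_{n+1}$ and $d_n+d_{n+1}+d_{n+2}\le P_n$ to get
\[
P_{n+1}\le \alpha P_n+\lambda P_n+\lambda P_{n+1}.
\]
Hence $P_{n+1}\le \frac{\alpha+\lambda}{1-\lambda}P_n=:\beta P_n$. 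Since $3\alpha+2\lambda<1$ implies $\alpha+2\lambda<1$, we have $\beta<1$. It follows that $d_n\le P_n\le \beta^nP_0$, so $\sum d_n<\infty$ and $(x_n)$ is Cauchy. By completeness, $x_n\to x^*$.

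The main obstacle is proving $Tx^*=x^*$ without assuming continuity. First, if $x^*=x_m$ for infinitely many $m$, the sequence would be eventually periodic, contradicting $d_n\to0$ together with $x_{n+1}\ne x_n$. So for large $n$ the points $x^*,x_n,x_{n+1}$ are distinct, and for infinitely many $n$ also $x_{n+1}\ne Tx^*$. Suppose $Tx^*\ne x^*$. Apply (\ref{0.1}) to the quadruple $(x^*,x_n,x_{n+1},x_{n+2})$, ordered so that the cycle is $x^*\to x_n\to x_{n+1}\to x_{n+2}\to x^*$. The left-hand side is
\[
d(Tx^*,x_{n+1})+d_{n+1}+d_{n+2}+d(x_{n+3},Tx^*).
\]
As $n\to\infty$ this tends to $2d(x^*,Tx^*)$. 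The right-hand side tends to $\alpha\cdot 0+\lambda\, d(x^*,Tx^*)$. This gives $2d(x^*,Tx^*)\le\lambda d(x^*,Tx^*)$, which forces $d(x^*,Tx^*)=0$ because $\lambda<1/2$. Care is needed to guarantee that the four points are pairwise distinct for infinitely many $n$, and this is handled by the remark above.

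Finally, we show there are at most three fixed points. Suppose $p,q,r,s$ are four distinct fixed points. Then (\ref{0.1}) gives
\[
d(p,q)+d(q,r)+d(r,s)+d(s,p)\le\alpha\bigl(d(p,q)+d(q,r)+d(r,s)+d(s,p)\bigr).
\]
Since $\alpha<1$, the perimeter must be $0$, contradicting distinctness.
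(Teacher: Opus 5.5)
Your proof is correct. Its outline matches the paper's: Picard iterates, distinctness of four consecutive terms from (i), a Cauchy estimate, passage to the limit in the quadruple $(x^*,x_n,x_{n+1},x_{n+2})$, and the perimeter argument for at most three fixed points.

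The genuine difference is the Cauchy step. The paper isolates the single distance $d_{n+3}$, using $d(x_{n+3},x_n)\le d_n+d_{n+1}+d_{n+2}$ and $d_{n+1}+d_{n+2}+d(x_{n+4},x_{n+1})\ge d_{n+3}$. This gives $d_{n+3}\le\gamma\max\{d_n,d_{n+1},d_{n+2}\}$ with $\gamma=\frac{3(2\alpha+\lambda)}{2-\lambda}$, followed by the three-step induction $d_n\le\gamma^{\lfloor n/3\rfloor}A$. You instead notice that the left-hand side is exactly the next perimeter $P_{n+1}$, which yields the one-step recursion $P_{n+1}\le\frac{\alpha+\lambda}{1-\lambda}P_n$ directly.

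This is shorter and also sharper. Your $\beta<1$ needs only $\alpha+2\lambda<1$, whereas the paper's $\gamma<1$ is exactly equivalent to $3\alpha+2\lambda<1$. Your limit step needs only $\lambda<2$ and the counting step only $\alpha<1$, so your argument proves the theorem under $\alpha+2\lambda<1$. That condition's endpoint cases $\lambda=0$ and $\alpha=0$ are precisely the parameter ranges of the Banach-type and Kannan-type quadrilateral contractions.

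Two smaller points on the limit step:
\begin{itemize}
\item You keep both $d(Tx^*,x_{n+1})$ and $d(x_{n+3},Tx^*)$ and obtain $(2-\lambda)\,d(x^*,Tx^*)\le 0$. The paper keeps only one of them and obtains $(1-\lambda)$. Either suffices.
\item Your aside that $x_{n+1}\ne Tx^*$ for infinitely many $n$ is unnecessary, since $Tx^*$ is not among the four points to which (\ref{0.1}) is applied. Your eventual-periodicity argument in fact shows that $x^*=x_m$ for at most one index $m$.
\end{itemize}
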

\begin{proof}
Let $p_0 \in X$ be an arbitrary starting point. Define the Picard iteration sequence $\{p_n\}$ by
\[
p_{n+1} = T p_n, \quad \forall~n \ge 0.
\]
If $p_n$ is a fixed point of $T$, then we are done. Suppose $p_n$ is not a fixed point of $T$ for any $n \ge 0$. Since $p_{n+1} = T p_n\implies p_n\ne Tp_n=p_{n+1}$ for all $n \ge 0$. Since $T$ has no periodic points of prime period 2, $p_n\ne T^2 p_n=p_{n+2}$. Since it has no periodic points of prime period 3, $p_n\ne T^3 p_n=p_{n+3}$.
Thus, for every $n \ge 0$, the four consecutive points $p_n, p_{n+1}, p_{n+2}, p_{n+3}$ are pairwise distinct. Setting $p = p_n, q = p_{n+1}, r = p_{n+2}, s = p_{n+3}$ and substituting $p_{k+1} = T p_k$ in (\ref{0.1}) yields:
\begin{align*}
    &d(p_{n+1}, p_{n+2}) + d(p_{n+2}, p_{n+3}) + d(p_{n+3}, p_{n+4}) + d(p_{n+4}, p_{n+1}) \\
    &\quad\le \alpha \big( d(p_n, p_{n+1}) + d(p_{n+1}, p_{n+2}) + d(p_{n+2}, p_{n+3}) + d(p_{n+3}, p_n) \big) \\
    &\quad\quad + \lambda \big( d(p_n, p_{n+1}) + d(p_{n+1}, p_{n+2}) + d(p_{n+2}, p_{n+3}) + d(p_{n+3}, p_{n+4}) \big).
\end{align*}

Using the triangle inequalities 
\[
d(p_{n+1}, p_{n+2}) + d(p_{n+2}, p_{n+3}) + d(p_{n+4}, p_{n+1}) \ge d(p_{n+3}, p_{n+4})
\]
and
\[
d(p_{n+3}, p_n) \le d(p_n, p_{n+1}) + d(p_{n+1}, p_{n+2}) + d(p_{n+2}, p_{n+3}),
\]
we get
\begin{align*}
    2 d(p_{n+3}, p_{n+4}) &\le 2\alpha \big( d(p_n, p_{n+1}) + d(p_{n+1}, p_{n+2}) + d(p_{n+2}, p_{n+3}) \big) \\
    &\quad + \lambda \big( d(p_n, p_{n+1}) + d(p_{n+1}, p_{n+2}) + d(p_{n+2}, p_{n+3}) + d(p_{n+3}, p_{n+4}) \big).
\end{align*}
Rearranging of terms yields
\[
(2 - \lambda) d(p_{n+3}, p_{n+4}) \le (2\alpha + \lambda) \big( d(p_n, p_{n+1}) + d(p_{n+1}, p_{n+2}) + d(p_{n+2}, p_{n+3}) \big).
\]
Since $3\alpha + 2\lambda < 1 \implies \lambda < 1/2$, we have $2 - \lambda > 0$. 
\[
d(p_{n+3}, p_{n+4}) \le \frac{3(2\alpha + \lambda)}{2 - \lambda} \max \Big\{ d(p_n, p_{n+1}), d(p_{n+1}, p_{n+2}), d(p_{n+2}, p_{n+3}) \Big\}.
\]

Let $\gamma = \frac{3(2\alpha + \lambda)}{2 - \lambda}$, using the condition $3\alpha + 2\lambda < 1$, we have $\gamma \in [0, 1)$. Setting $a_n = d(p_n, p_{n+1})$ for $n \ge 0$ and $A = \max \{a_0, a_1, a_2\}$, by induction for $n \ge 3$, we have
\[
a_n \le \gamma^{\lfloor \frac{n}{3} \rfloor} A \le \gamma^{\frac{n}{3} - 1} A.\tag{3.1}\label{0.3}
\]
For any $m \ge 1$ and $n \ge 3$,
\[
d(p_n, p_{n+m}) \le \sum_{j=0}^{m-1} a_{n+j} \le A \sum_{j=0}^{m-1} \gamma^{\frac{n+j}{3} - 1} \le \frac{A \gamma^{\frac{n-3}{3}}}{1 - \gamma^{1/3}}.
\]
Since $0 \le \gamma < 1$, as $n \to \infty$, $d(p_n, p_{n+m}) \to 0$ for all $m \ge 1$. Hence, $\{p_n\}$ is a Cauchy sequence. By completeness of $(X, d)$, there exists $p^* \in X$ such that $ \lim \limits_{n \to \infty} p_n = p^*$.

If $p^* = p_k$ for some $k \ge 0$, the Cauchy property prevents $p^*$ from repeating at any $m > k$ (otherwise $\{p_n\}$ would cycle with positive distance). Thus, $p^* = p_k$ for at most one index $k$. Consequently, for all $n > k$, the four points $p^*, p_n, p_{n+1}, p_{n+2}$ are pairwise distinct.

Applying the inequality (\ref{0.1}) to $p = p^*, q = p_n, r = p_{n+1}, s = p_{n+2}$ yields:
\begin{align*}
    &d(T p^*, p_{n+1}) + d(p_{n+1}, p_{n+2}) + d(p_{n+2}, p_{n+3}) + d(p_{n+3}, T p^*) \\
    &\quad\le \alpha \big( d(p^*, p_n) + d(p_n, p_{n+1}) + d(p_{n+1}, p_{n+2}) + d(p_{n+2}, p^*) \big) \\
    &\quad \quad+ \lambda \big( d(p^*, T p^*) + d(p_n, p_{n+1}) + d(p_{n+1}, p_{n+2}) + d(p_{n+2}, p_{n+3}) \big).
\end{align*}
Using $d(T p^*, p_{n+1}) \ge d(p^*, T p^*) - d(p^*, p_{n+1})$, we obtain
\begin{align*}
    (1 - \lambda) d(p^*, T p^*) &\le (1 + \alpha) d(p^*, p_{n+1}) + \alpha d(p_{n+2}, p^*) + \alpha d(p^*, p_n) \\
    &\quad + (\alpha + \lambda) d(p_n, p_{n+1}) + (\alpha + \lambda - 1) d(p_{n+1}, p_{n+2}) + (\lambda - 1) d(p_{n+2}, p_{n+3}).
\end{align*}
Taking the limit as $n \to \infty$, we get
\[
(1 - \lambda) d(p^*, T p^*) \le 0.
\]
Since $\lambda < 1/2$, we conclude that $d(p^*, T p^*) = 0$, so $T p^* = p^*$.

Finally, suppose $T$ has four distinct fixed points $u_1, u_2, u_3, u_4 \in X$. Substituting these into the inequality (\ref{0.1}) gives
\[
d(u_1, u_2)+d(u_2, u_3)+d(u_3, u_4)+d(u_4, u_1) \le \alpha d(u_1, u_2)+d(u_2, u_3)+d(u_3, u_4)+d(u_4, u_1) +\lambda(0),
\]
which implies $1 \le \alpha$, a contradiction. Therefore, $T$ has at most three fixed points.
\end{proof}

\begin{example}
Let $X=\{a,b,c,d\}$ and define a metric $d:X\times X\to[0,\infty)$ by
\[
d(p,q)=
\begin{cases}
0, & p=q,\\
1, & \{p,q\}\in\bigl\{\{a,b\},\{a,c\},\{b,c\}\bigr\},\\
8, & \{p,q\}=\{a,d\},\\
7, & \{p,q\}\in\bigl\{\{b,d\},\{c,d\}\bigr\}.
\end{cases}
\]
Then $(X, d)$ is a complete metric space.\\
Define $T:X\to X$ by
\[
T(a)=a,\qquad
T(b)=b,\qquad
T(c)=c,\qquad
T(d)=a.
\]
Then $T$ is not a CRR-type mapping. If $T$ is a generalized CRR-type mapping \cite{BishtPetrovCRR}, then we have
\begin{align*}
d(a,b) + d(b,c) + d(c,a)&\le \alpha\bigl(d(a,b) + d(b,c) + d(c,a)\bigr) + \lambda\bigl(d(a,a) + d(b,b) + d(c,c)\bigr)\\
3&\le 3\alpha,
\end{align*} 
which is a contradiction to (\ref{1.5}). Thus $T$ is not a generalized CRR-type mapping. However, it satisfies all the hypotheses of Theorem \ref{theorem3.1} with $\alpha=\frac{1}{10}$ and $\lambda=\frac{3}{10}$. Consequently, by Theorem \ref{theorem3.1}, $T$ has three fixed point in $X$. In fact, $\operatorname{Fix}(T)=\{a,b,c\}$.
\end{example}

\begin{example}
Let $X = \{0, 1, 2\} \cup [100, 200]$ be equipped with the usual metric $d$. Then $(X,d)$ is a complete metric space. Define \(T: X \to X\) by
\[
T(0)=0, T(1)=1, T(2)=2 ~and~T(x)=0~\forall~ x \in [100,200].
\]

For any $x\in [100,200], T(x)=0, T^2(x)=T(0)=0$ and $T^3(x)=0$. Therefore, $T$ has no periodic point of prime period $2$ and $3$. Also, $T$ satisfies the GQCRR inequality (\ref{0.1}) with $\alpha = \frac{1}{10}$ and $ \lambda = \frac{3}{10}$. Therefore, Applying Theorem \ref{theorem3.1}, we conclude that $T$ has exactly three fixed points namely $0, 1, 2$.
\end{example}

\begin{remark}
Suppose that under the hypotheses of Theorem \ref{theorem3.1}, the mapping $T$ has a fixed point $p^*$, which is the limit of some iterative sequence $p_0,\quad p_1 = Tp_0,\quad p_2 = Tp_1,\quad \ldots $ such that $p_n \neq p^*$ for all $n\ge 1$. Then $p^*$ is the unique fixed point.

Indeed, assume that $T$ admits another fixed point $q^* \neq p^*$. Since $p_n \neq p^*$ for all $n$, and $q^* \neq p^*$, it is clear that the points $p_n$ are distinct from both $p^*$ and $q^*$ for all $n\ge1$. In particular, for any $n\ge1$, the four points $p^*,  q^*, p_n,  p_{n+1}$ are pairwise distinct.

Applying the inequality (\ref{0.1}) to these four points, we get

\[
\begin{aligned}
&d(Tp^*, Tq^*) + d(Tq^*, Tp_n) + d(Tp_n, Tp_{n+1}) + d(Tp_{n+1}, Tp^*) \\
&\quad \le \alpha\big(d(p^*, q^*) + d(q^*, p_n) + d(p_n, p_{n+1}) + d(p_{n+1}, p^*)\big) \\
&\quad \quad + \lambda\big(d(p^*, Tp^*) + d(q^*, Tq^*) + d(p_n, Tp_n) + d(p_{n+1}, Tp_{n+1})\big).
\end{aligned}
\tag{3.2}\label{3.1}
\]

Since $p^*$ and $q^*$ are fixed points, we have $Tp^* = p^*$ and $Tq^* = q^*$. Also $Tp_n = p_{n+1}$ and $Tp_{n+1} = p_{n+2}$. Thus (\ref{3.1}) reduces to

\[
\begin{aligned}
&d(p^*, q^*) + d(q^*, p_{n+1}) + d(p_{n+1}, p_{n+2}) + d(p_{n+2}, p^*) \\
&\quad \le \alpha\big(d(p^*, q^*) + d(q^*, p_n) + d(p_n, p_{n+1}) + d(p_{n+1}, p^*)\big) \\
&\quad \quad + \lambda\big(0 + 0 + d(p_n, p_{n+1}) + d(p_{n+1}, p_{n+2})\big).
\end{aligned}
\tag{3.3}\label{3.2}
\]

Now, let $n \to \infty$. Since $p_n \to p^*$, we have

\[
d(q^*, p_n) \to d(q^*, p^*), \quad d(p_{n+1}, p^*) \to 0, \quad d(p_{n+2}, p^*) \to 0,
\]

and
\[
d(p_n, p_{n+1}) \to 0, \quad d(p_{n+1}, p_{n+2}) \to 0.
\]

Therefore, inequality (\ref{3.2}) yields
\[
2d(p^*, q^*) \le 2\alpha d(p^*, q^*).
\]

Since $\alpha < \frac{1}{3} < 1$, we have $2 - 2\alpha > 0$, so the inequality implies $d(p^*, q^*) = 0$, a contradiction to $p^* \neq q^*$. Hence $T$ cannot have another fixed point. Thus $p^*$ is the unique fixed point of $T$.
\end{remark}

\begin{proposition}
Let $(X,d)$ be a metric space with $\left|X\right|\ge4$, and let $T: X\to X$ be a generalized quadrilateral CRR-type mapping on $X$ such that $T p^* = p^*$. Then \(T\) is continuous at \(p^*\).
\end{proposition}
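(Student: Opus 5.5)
The idea is to use sequential continuity and test inequality (\ref{0.1}) on quadrilaterals made of $p^*$ and three terms of a sequence converging to $p^*$, ordered so that $p^*$ sits between two of them. If $p^*$ is an isolated point of $X$ there is nothing to prove, so assume $p^*$ is an accumulation point. Let $x_n\to p^*$.

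\textbf{Reduction to injective sequences.} Indices with $x_n=p^*$ satisfy $Tx_n=p^*$ and can be discarded. It suffices to show that every subsequence of $\{x_n\}$ has a further subsequence along which $Tx_n\to p^*$. Since $x_n\to p^*$ with $x_n\neq p^*$, any subsequence contains an injective further subsequence. So we may assume that the points $x_n$ are pairwise distinct and all different from $p^*$. Then for distinct indices $n,m,k$ the points $x_n,p^*,x_m,x_k$ are pairwise distinct.

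\textbf{The key estimate.} Write $t_i=d(p^*,Tx_i)$. Apply (\ref{0.1}) with $p=x_n$, $q=p^*$, $r=x_m$, $s=x_k$ and use $Tp^*=p^*$. The left-hand side is
\[
t_n+t_m+d(Tx_m,Tx_k)+d(Tx_k,Tx_n).
\]
By the triangle inequality it is at least $t_n+t_m$. Using $d(Tx_m,Tx_k)\ge |t_m-t_k|$ and $d(Tx_k,Tx_n)\ge|t_k-t_n|$, it is also at least $2t_k-t_n-t_m$. Relabel the three indices so that $t_k$ is the largest of the three values; this is allowed because the inequality holds for every ordering. Averaging the two lower bounds and setting $S=t_n+t_m+t_k$ gives
\[
\text{LHS}\ge \tfrac{1}{2}(t_k+S)\ge \tfrac{2}{3}S .
\]
For the right-hand side, use $d(x_i,Tx_i)\le d(x_i,p^*)+t_i$. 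This gives
\[
\text{RHS}\le \alpha\,\varepsilon_{n,m,k}+\lambda\bigl(S+\delta_{n,m,k}\bigr),
\]
where $\varepsilon_{n,m,k}$ and $\delta_{n,m,k}$ are sums of distances $d(x_i,p^*)$ and $d(x_i,x_j)$. Both tend to $0$ as $\min\{n,m,k\}\to\infty$.

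\textbf{Conclusion.} Combining the two bounds gives $(\tfrac{2}{3}-\lambda)S\le \alpha\varepsilon_{n,m,k}+\lambda\delta_{n,m,k}$. Since $3\alpha+2\lambda<1$ forces $\lambda<\tfrac12<\tfrac23$, the coefficient on the left is positive. Taking any three distinct large indices therefore shows $t_n\to 0$, that is, $Tx_n\to p^*=Tp^*$.

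The main obstacle is that the terms $d(Tx_m,Tx_k)$ and $d(Tx_k,Tx_n)$ are not controlled a priori. The step of choosing $k$ as the index with the largest $t$ is exactly what turns these two terms into a usable lower bound of order $\tfrac23 S$, which beats the factor $\lambda$ on the right-hand side.
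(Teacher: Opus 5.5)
Your proof is correct, and it handles the vertex opposite $p^*$ differently from the paper.

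\textbf{Comparison with the paper.} The paper applies (\ref{0.1}) to $p^*,p_n,p_{n+1},p_{n+2}$ and discards the two terms $d(Tp_n,Tp_{n+1})+d(Tp_{n+1},Tp_{n+2})$ that do not involve $p^*$. It is left with $(1-\lambda)(u_n+u_{n+2})\le \epsilon_n+\lambda u_{n+1}$, in which $u_{n+1}$ is uncontrolled, and closes with a $\limsup$ argument. You keep those two terms, bound them below by $2t_k-t_n-t_m$, and use the freedom of labeling to put the largest $t$ at the opposite vertex. This gives a direct estimate with no limiting step, in fact
$t_n\le \frac{3(2\alpha+\lambda)}{2-3\lambda}\bigl(d(x_n,p^*)+d(x_{n+1},p^*)+d(x_{n+2},p^*)\bigr)$.

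\textbf{Where your version is more careful.} First, the paper asserts without justification that $p^*,p_n,p_{n+1},p_{n+2}$ are pairwise distinct for large $n$. This fails if the sequence repeats terms, and its dichotomy on terms equal to $p^*$ misses sequences that hit $p^*$ infinitely often but not eventually. Your reduction to injective subsequences handles both issues. Second, the paper's identity $\limsup(u_n+u_{n+2})=2L$ holds in general only as ``$\le$'', which is the wrong direction for its purpose. Its route is easily repaired by iterating $u_{n+2}\le(\epsilon_n+\lambda u_{n+1})/(1-\lambda)$, using $\lambda/(1-\lambda)<1$.

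\textbf{One expository fix.} The bound $2t_k-t_n-t_m$ applies to $d(Tx_m,Tx_k)+d(Tx_k,Tx_n)$, not to the whole left-hand side. So the left-hand side is at least $2t_k$, which is already $\ge\tfrac23 S$ because $t_k$ is the largest. The averaging step is therefore unnecessary.
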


\begin{proof}
Let \(\{p_n\}\) be a sequence in \(X\) such that \(p_n \to p^*\). We must show \(T p_n \to T p^* = p^*\). Define $u_n := d(Tp_n, p^*)~\text {and}~a_n := d(p_n, p^*)$, by the convergence of $\{p_n\}$, we have $a_n \to 0$ as $n\to\infty$. If infinitely many terms of $\{p_n\}$ equal \(p^*\), then continuity is trivial. Otherwise, discarding finitely many terms, we may assume \(p_n \ne p^*\). For sufficiently large \(n\), the four points $p^*,\; p_n,\; p_{n+1},\; p_{n+2}$
are pairwise distinct. 

Applying the inequality (\ref{0.1}) to the four points with $p=p^*, q= p_n, r= p_{n+1}, s=p_{n+2}$, we obtain
\[
\begin{aligned}
&d(p^*,Tp_n) + d(Tp_n,Tp_{n+1}) + d(Tp_{n+1},Tp_{n+2}) + d(Tp_{n+2},Tp^*) \\
&\quad \le \alpha\big(d(p^*,p_n)+d(p_n,p_{n+1})+d(p_{n+1},p_{n+2})+d(p_{n+2},p^*)\big) \\
&\quad \quad + \lambda\big(d(p^*,Tp^*)+d(p_n,Tp_n)+d(p_{n+1},Tp_{n+1})+d(p_{n+2},Tp_{n+2})\big).
\end{aligned}
\tag{3.4}\label{3.3}
\]
Since \(Tp^*=p^*\), the left‑hand side of (\ref{3.3}) becomes
\[
u_n + d(Tp_n,Tp_{n+1}) + d(Tp_{n+1},Tp_{n+2}) + u_{n+2}.
\]
Because distances are non-negative, this is at least $u_n + u_{n+2}$.\\
Using the triangle inequality
\[
d(p_n,p_{n+1}) \le d(p_n,p^*) + d(p^*, p_{n+1}) = a_n + a_{n+1},
\]
\[
d(p_{n+1},p_{n+2}) \le a_{n+1} + a_{n+2},
\]
and
\[
d(p_k,Tp_k) \le d(p_k,p^*) + d(p^*,Tp_k) = a_k + u_k \qquad (k=n,n+1,n+2).
\]

Substituting these estimates with \(d(p^*,Tp^*)=0\) into (\ref{3.3}) yields
\[
u_n + u_{n+2}
\le \alpha(2a_n + 2a_{n+1} + 2a_{n+2})
+ \lambda(a_n + a_{n+1} + a_{n+2} + u_n + u_{n+1} + u_{n+2}).
\]
Define
\[
\epsilon_n := \alpha(2a_n + 2a_{n+1} + 2a_{n+2}) + \lambda(a_n + a_{n+1} + a_{n+2}).
\]
Thus the inequality becomes
\[
u_n + u_{n+2} \le \epsilon_n + \lambda(u_n + u_{n+1} + u_{n+2}).
\]
Rearranging terms,
\[
(1-\lambda)(u_n + u_{n+2}) \le \epsilon_n + \lambda u_{n+1}. \tag{3.5}\label{3.4}
\]

We now take the limit superior as \(n\to\infty\) on both sides of (\ref{3.4}).  
Let
\[
L := \limsup_{n\to\infty} u_n.
\]
Since \(u_n\ge 0\), we have \(L\ge 0\).  
Also,
\[
\limsup_{n\to\infty} (u_n+u_{n+2}) = 2L ,\quad \limsup_{n\to\infty} u_{n+1} = L,
\]
and
\[
\limsup_{n\to\infty} \epsilon_n = 0.
\]

Therefore, applying \(\limsup\) to (\ref{3.4}) gives
\[
(2-3\lambda)L \le 0. 
\]
Since $\lambda < \frac{1}{2}.$ Consequently,
\[
2 - 3\lambda > 2 - \frac32 = \frac12 > 0.
\]
Therefore,
\[
\limsup_{n\to\infty} d(Tp_n, p^*) = 0.
\]
Since $d(Tp_n,p^*)\ge 0$ for all $n\implies \lim \limits_{n\to\infty} d(Tp_n, p^*) = 0,$ $i.e., T p_n \rightarrow p^* = T p^*$.
\end{proof}

\begin{definition}
Let $(X,d)$ be a metric space. A mapping $T:X\to X$ is said to be \emph{asymptotically regular}~\cite{asym}
if, for every $p\in X$,
\[
\lim_{n\to\infty}d(T^n p,T^{n+1}p)=0.
\]
\end{definition}

Asymptotic regularity broadens the applicability of contractive mappings by relaxing parameter restrictions and extending the class of mappings for which fixed-point theorems can be established.
\begin{remark}\label{4.2}
Let $(X,d)$ be a metric space and $T: X \to X$ be a self-mapping. Consider the sequence of iterates $\{p_n\}$ defined by $p_{n+1} = T p_n$ from some initial point $p_0$ for all $n \ge 0$. If $T$ is asymptotically regular and the sequence $\{p_n\}$ contains no fixed point of $T$, then every term of the sequence is distinct i.e., $p_i \ne p_j$, $i \ne j$. Indeed, if $p_i = p_j$ for some $i < j$, then the sequence becomes eventually periodic from index $i$ onward, which forces $d(p_n, p_{n+1})$ to fail to converge to zero, thereby contradicting the asymptotic regularity of $T$. 
\end{remark}
\begin{theorem}
Let $(X,d)$ be a complete metric space with $\left|X\right|\ge4$, and let $T:X\to X$ be an asymptotically regular mapping that satisfies the GQCRR inequality with coefficients $\alpha\in[0,\frac{1}{3})$ and $\lambda\in[0,1)$. Then $T$ has a fixed point. Moreover, $T$ can have at most three fixed points.
\end{theorem}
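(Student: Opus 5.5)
Take any $p_0\in X$ and set $p_{n+1}=Tp_n$. If some $p_n$ is fixed we are done, so assume none is. By Remark~\ref{4.2}, asymptotic regularity makes all $p_n$ pairwise distinct, so any four of them may be inserted into (\ref{0.1}). Put $a_n=d(p_n,p_{n+1})\to 0$. The aim is to show that $\{p_n\}$ is Cauchy. Note that the geometric-decay argument of Theorem~\ref{theorem3.1} is no longer available, because $\lambda$ may be close to $1$.

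To prove the Cauchy property I argue by contradiction. Suppose it fails. The standard construction gives $\varepsilon>0$ and indices $m_k>n_k\ge k$ with $d(p_{n_k},p_{m_k})\ge\varepsilon$, where $m_k$ is minimal, so that $d(p_{n_k},p_{m_k-1})<\varepsilon$. Using $a_n\to0$ and the triangle inequality, $d(p_{n_k+i},p_{m_k+j})\to\varepsilon$ for every fixed $i,j\ge 0$.

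Now apply (\ref{0.1}) to the quadruple $p=p_{n_k}$, $q=p_{n_k+1}$, $r=p_{m_k}$, $s=p_{m_k+1}$. These are distinct once $m_k>n_k+1$, which holds for large $k$ since $a_n\to0<\varepsilon$. The left side is
\[
a_{n_k+1}+d(p_{n_k+2},p_{m_k+1})+a_{m_k+1}+d(p_{m_k+2},p_{n_k+1}),
\]
which tends to $2\varepsilon$. The right side is
\[
\alpha\bigl(a_{n_k}+d(p_{n_k+1},p_{m_k})+a_{m_k}+d(p_{m_k+1},p_{n_k})\bigr)+\lambda\bigl(a_{n_k}+a_{n_k+1}+a_{m_k}+a_{m_k+1}\bigr),
\]
which tends to $2\alpha\varepsilon$. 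Hence $2\varepsilon\le2\alpha\varepsilon$, contradicting $\alpha<1/3<1$. So $\{p_n\}$ is Cauchy, and by completeness $p_n\to p^*$.

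The fixed-point step follows Theorem~\ref{theorem3.1}. Since the $p_n$ are distinct, $p^*$ equals at most one of them, so for large $n$ the points $p^*,p_n,p_{n+1},p_{n+2}$ are distinct. Apply (\ref{0.1}) to them and use
\[
d(Tp^*,p_{n+1})\ge d(p^*,Tp^*)-d(p^*,p_{n+1}),
\]
and also the analogous bound for $d(p_{n+3},Tp^*)$. Letting $n\to\infty$, the left side is at least $2d(p^*,Tp^*)$ in the limit, while the right side tends to $\lambda\,d(p^*,Tp^*)$. This gives $(2-\lambda)d(p^*,Tp^*)\le0$, so $Tp^*=p^*$ because $\lambda<1$. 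Keeping both terms involving $Tp^*$ is necessary here. Using only the first, as in Theorem~\ref{theorem3.1}, yields $(1-\lambda)d(p^*,Tp^*)\le 0$, which also works, but the two-term version gives more slack.

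For the bound on the number of fixed points, suppose there are four distinct fixed points. Inserting them into (\ref{0.1}) gives that their perimeter is at most $\alpha$ times itself, which is impossible since the perimeter is positive and $\alpha<1$. So $T$ has at most three fixed points.

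The main obstacle is the Cauchy step. Its difficulty lies in choosing the quadruple so that every $\lambda$-term is a consecutive distance $a_n$, which vanishes by asymptotic regularity, and in verifying that the four points are distinct for large $k$.
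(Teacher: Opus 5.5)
Your proof is correct, but your Cauchy step takes a different route from the paper's.

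The paper argues directly. For $m\ge 2$ it inserts the quadruple $p_n,p_{n+m-1},p_{n+m},p_{n+1}$ into (\ref{0.1}) and drops the nonnegative extra terms on the left. It then combines this with $d(p_n,p_{n+m})\le a_n+d(p_{n+1},p_{n+m})$ to obtain
\[
(1-2\alpha)\,d(p_n,p_{n+m})\le (1+2\alpha+\lambda)a_n+(2\alpha+\lambda)a_{n+m-1}+\lambda a_{n+m}+\lambda a_{n+1},
\]
a bound uniform in $m$ that tends to $0$ by asymptotic regularity. This gives an explicit rate in terms of $\sup_{k\ge n}a_k$. The cost is that it needs $1-2\alpha>0$, i.e.\ $\alpha<\tfrac12$. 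Also, the quadruple degenerates for $m=2$, since then $p_{n+m-1}=p_{n+1}$. The paper passes over this case, although $d(p_n,p_{n+2})\le a_n+a_{n+1}$ covers it trivially.

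Your minimal-index contradiction argument with $p_{n_k},p_{n_k+1},p_{m_k},p_{m_k+1}$ gives no rate. In exchange, it has no degenerate case once $m_k\ge n_k+2$, and it yields the Cauchy property under $\alpha<1$ alone, with no restriction on $\lambda$.

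For the fixed-point step, the paper simply reuses the one-term estimate of Theorem~\ref{theorem3.1}. This gives $(1-\lambda)d(p^*,Tp^*)\le 0$, which is enough since $\lambda<1$. So your claim that keeping both $Tp^*$ terms is \emph{necessary} is inaccurate; the two-term version is optional. Combined with your Cauchy argument, though, it shows that your proof establishes the statement under the weaker hypotheses $\alpha<1$ and $\lambda<2$.
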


\begin{proof}
Consider an arbitrary point $p_0 \in X$ and define the Picard sequence as
$$T(p_0) = p_1,\quad T(p_1) = p_2,\quad\dots,\quad p_{n+1}=Tp_n.$$ Suppose that $\{p_n\}$ does not contain a fixed point of $T$. we now prove that $\{p_n\}$ is a Cauchy sequence. It is sufficient to show that $d(p_n, p_{n+m}) \to 0$ as $n \to \infty$ for all $m> 0$. If $m = 1$, then this follows from the definition of asymptotic regularity. Let $m\ge 2$. By Remark \ref{4.2}, the points $p_n$, $p_{n+m-1}$, $p_{n+m}$ and $p_{n+1}$ are pairwise distinct.

We have the triangle inequality, 

\begin{equation}
d(p_n, p_{n+m}) \le d(p_n, p_{n+1}) + d(p_{n+1}, p_{n+m}).
\tag{3.6}\label{4.1}
\end{equation}

Now, apply the inequality (\ref{0.1}) to the four distinct points $p_n, p_{n+m-1}, p_{n+m}, p_{n+1}$ and Substitute $Tp_k = p_{k+1}$, we obtain
\begin{equation}
\begin{aligned}
&d(p_{n+1}, p_{n+m}) + d(p_{n+m}, p_{n+m+1}) + d(p_{n+m+1}, p_{n+2}) + d(p_{n+2}, p_{n+1}) \\
&\quad \le \alpha\big(d(p_n, p_{n+m-1}) + d(p_{n+m-1}, p_{n+m}) + d(p_{n+m}, p_{n+1}) + d(p_{n+1}, p_n)\big) \\
&\quad \quad + \lambda\big(d(p_n, p_{n+1}) + d(p_{n+m-1}, p_{n+m}) + d(p_{n+m}, p_{n+m+1}) + d(p_{n+1}, p_{n+2})\big).
\end{aligned}
\tag{3.7}\label{4.2}
\end{equation}

From (\ref{4.2}), we can isolate the term $d(p_{n+1}, p_{n+m})$ by noting that all remaining terms on the left side are non-negative. Hence,

\begin{equation}
\begin{aligned}
d(p_{n+1}, p_{n+m}) &\le \alpha\big(d(p_n, p_{n+m-1}) + d(p_{n+m-1}, p_{n+m}) + d(p_{n+m}, p_{n+1}) + d(p_{n+1}, p_n)\big) \\
&\quad + \lambda\big(d(p_n, p_{n+1}) + d(p_{n+m-1}, p_{n+m}) + d(p_{n+m}, p_{n+m+1}) + d(p_{n+1}, p_{n+2})\big).
\end{aligned}
\tag{3.8}\label{4.3}
\end{equation}

Now, using the triangle inequality,

\begin{equation}
d(p_n, p_{n+m-1}) \le d(p_n, p_{n+m}) + d(p_{n+m}, p_{n+m-1}),
\tag{3.9}\label{4.4}
\end{equation}

and

\begin{equation}
d(p_{n+m}, p_{n+1}) \le d(p_{n+m}, p_n) + d(p_n, p_{n+1})
\tag{3.10}\label{4.5}
\end{equation}
Substituting (\ref{4.4}) and (\ref{4.5}) into (\ref{4.3}), we get
\begin{equation}
\begin{aligned}
d(p_{n+1}, p_{n+m}) &\le 2\alpha d(p_n, p_{n+m}) + 2\alpha d(p_{n+m}, p_{n+m-1}) + (2\alpha + \lambda)d(p_n, p_{n+1}) \\
&\quad + \lambda d(p_{n+m-1}, p_{n+m}) + \lambda d(p_{n+m}, p_{n+m+1}) + \lambda d(p_{n+1}, p_{n+2}).
\end{aligned}
\tag{3.11}\label{4.6}
\end{equation}
Using (\ref{4.6}) into (\ref{4.1}) and after simplifying the terms, we obtain
\begin{equation*}
\begin{aligned}
d(p_n, p_{n+m}) &\le \frac{1}{1 - 2\alpha}\big[(1 + 2\alpha + \lambda)d(p_n, p_{n+1}) + (2\alpha + \lambda)d(p_{n+m}, p_{n+m-1}) \\
&\quad + \lambda d(p_{n+m}, p_{n+m+1}) + \lambda d(p_{n+1}, p_{n+2})\big]\to 0,
\end{aligned}
\end{equation*}
as $n \to \infty$. Therefore, $\{p_n\}$ is a Cauchy sequence. The rest of the proof proceeds analogously to that of Theorem \ref{theorem3.1}.
\end{proof}

\section{Fixed Point Theorems in Incomplete Metric Spaces}
We now present the following two fixed‑point theorems that do not require completeness. Theese theorems are analogue of Theorem 1 and Theorem 2 from \cite{kannan}.
\begin{theorem}\label{theorem5.1}
Let $(X,d)$ be a metric space with $|X|\geq 4$, and let
$T:X\to X$ be a generalized quadrilateral CRR-type mapping. Assume that the following conditions hold:
\begin{enumerate}
    \item[(i)] $T$ has no periodic point of prime period $2$ or $3$;
  \item[(ii)] $T$ is continuous at $p^*$;
    \item[(iii)] There exists $p_0\in X$ such that, for the Picard
    iteration $p_{n+1}=Tp_n,~ n=0,1,2,\ldots,$ the sequence $\{p_n\}$ possesses a subsequence $\{p_{n_k}\}$
    converging to some $p^*\in X$.
\end{enumerate}
Then $p^*$ is a fixed point of $T$. The number of fixed points is at most three.
\end{theorem}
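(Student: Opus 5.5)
The plan is to reuse the contraction estimate from the proof of Theorem \ref{theorem3.1}; that estimate never used completeness. First I dispose of the trivial case: if some $p_n$ is a fixed point, the sequence is eventually constant, so $p_{n_k}\to p^*$ forces $p^*=p_n$ and we are done. Otherwise $p_n\ne Tp_n$ for all $n$. Condition (i) then makes $p_n,p_{n+1},p_{n+2},p_{n+3}$ pairwise distinct, exactly as in Theorem \ref{theorem3.1}. The same computation gives $a_{n+3}\le \gamma\max\{a_n,a_{n+1},a_{n+2}\}$ with $\gamma=\frac{3(2\alpha+\lambda)}{2-\lambda}\in[0,1)$, where $a_n=d(p_n,p_{n+1})$. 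Hence (\ref{0.3}) holds, and $\{p_n\}$ is Cauchy. In particular $a_n\to0$.

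Next I upgrade subsequential convergence to full convergence. A Cauchy sequence with a subsequence converging to $p^*$ itself converges to $p^*$, so $p_n\to p^*$. By (ii), $T$ is continuous at $p^*$. Therefore $p_{n+1}=Tp_n\to Tp^*$, and uniqueness of limits in a metric space gives $Tp^*=p^*$. This avoids any need for the inequality-based limit argument of Theorem \ref{theorem3.1}, although that argument could also be repeated verbatim, since it only used $p_n\to p^*$ and $a_n\to0$. In that alternative, the points $p^*,p_n,p_{n+1},p_{n+2}$ are pairwise distinct for large $n$, because the $p_n$ are eventually distinct from $p^*$ and from one another.

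The bound on the number of fixed points is purely algebraic and does not involve completeness. Suppose $u_1,u_2,u_3,u_4$ are distinct fixed points. Then (\ref{0.1}) reduces to $P\le\alpha P$, where $P=d(u_1,u_2)+d(u_2,u_3)+d(u_3,u_4)+d(u_4,u_1)>0$. Since $\alpha<\tfrac13$, this is impossible, so $T$ has at most three fixed points.

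The main point needing care is the first step: checking that the Cauchy estimate in Theorem \ref{theorem3.1} depends only on the pairwise distinctness of four consecutive iterates, which follows from (i), and not on completeness. Once this is confirmed, the rest is routine.
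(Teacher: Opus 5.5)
Your proposal is correct and is essentially the paper's argument. Both reuse the estimate (\ref{0.3}) from Theorem~\ref{theorem3.1}, which depends only on (i) and not on completeness, and combine it with continuity at $p^*$: the paper uses $a_{n_k}\to 0$ and a triangle-inequality contradiction along the subsequence, while you first upgrade to $p_n\to p^*$ via the Cauchy property. Your side remark is also sound and worth stating outright: once $p_n\to p^*$ with the $p_n$ eventually distinct from $p^*$, the limit argument of Theorem~\ref{theorem3.1} gives $Tp^*=p^*$ with no continuity at all, so hypothesis (ii) is in fact superfluous.
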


\begin{proof}
Let $p_{n+1}=Tp_n,~ n=0,1,2,\ldots,$ where $p_0\in X$ is the point specified in condition $(iii)$. If $p_n=p_{n+1}$ for some $n\geq0$, then $p_n$ is a fixed point of $T$. In this case, the conclusion follows immediately. Hence, we suppose that $p_n\neq p_{n+1} ~\text{for every }n\geq0.$

Since $T$ has no periodic points of prime period $2$ or $3$, we also
have $p_n\neq p_{n+2}~\text{and}~p_n\neq p_{n+3} $ for every $n\geq0$. Consequently, the four consecutive points
$p_n, p_{n+1}, p_{n+2}, p_{n+3}$ are pairwise distinct for every $n\geq0$.

By assumption, there exists a subsequence $p_{n_k}\rightarrow p^*$. Since $T$ is continuous at $p^*$, we have $Tp_{n_k}=p_{n_k+1}\rightarrow Tp^*$. Suppose $Tp^*\neq p^*$.
Choose $r>0$ such that $0<r<\frac13d(p^*,Tp^*).$ Consequently, there exists $K\in\mathbb{N}$ such that, for every $k\geq K$,
\[
d(p_{n_k},p^*)<r
\]
and
\[
d(p_{n_k+1},Tp^*)<r.
\]
Hence,
\[
\begin{aligned}
d(p_{n_k},p_{n_k+1}) >r.
\end{aligned}
\tag{5.1}\label{5.1}
\]

On the other hand, from (\ref{0.3}),
\[
d(p_{n_k},p_{n_k+1})
=a_{n_k}
\rightarrow0,
\]
which contradicts (\ref{5.1}). Hence, $Tp^*=p^*.$

As established in the proof of Theorem \ref{theorem3.1}, there can be at most three fixed points.
\end{proof}
\begin{theorem}\label{theorem5.2}
Let $(X,d)$ be a metric space with $|X|\ge 4$, and let $T:X\to X$ be continuous on $X$ and satisfies the GQCRR condition on a dense subset $Y$ i.e., the inequality (\ref{0.1}) holds for all pairwise distinct points $p,q,r,s\in Y.$ Suppose that:

\begin{enumerate}
 \item[(i)] $T$ has no periodic points of prime period $2$ and $3$;
 \item[(ii)] There exists $p_0\in X$ such that, for the Picard
    iteration $p_{n+1}=Tp_n,~ n=0,1,2,\ldots,$ the sequence $\{p_n\}$ possesses a subsequence $\{p_{n_k}\}$
    converging to some $p^*\in X$.
\end{enumerate}

Then $p^*$ is a fixed point of $T$. The number of fixed points is at most three.
\end{theorem}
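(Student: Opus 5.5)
Since $T$ is continuous on all of $X$ and $Y$ is dense, the first step is to extend the GQCRR inequality (\ref{0.1}) from $Y$ to all of $X$, and then invoke Theorem \ref{theorem5.1}. Let $p,q,r,s\in X$ be pairwise distinct. By density of $Y$ we choose sequences $p_k\to p$, $q_k\to q$, $r_k\to r$, $s_k\to s$ in $Y$. Because the four limits are pairwise distinct, the points $p_k,q_k,r_k,s_k$ are pairwise distinct for all sufficiently large $k$. Inequality (\ref{0.1}) then holds for $p_k,q_k,r_k,s_k$. Continuity of $T$ gives $Tp_k\to Tp$, and similarly for the other three points. Continuity of the metric makes every term on both sides of (\ref{0.1}) converge: $d(Tp_k,Tq_k)\to d(Tp,Tq)$, $d(p_k,q_k)\to d(p,q)$, $d(p_k,Tp_k)\to d(p,Tp)$, and so on. Letting $k\to\infty$ yields (\ref{0.1}) for $p,q,r,s$, so $T$ is a generalized quadrilateral CRR-type mapping on $X$ with the same $\alpha,\lambda$.

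Once this is established, the hypotheses of Theorem \ref{theorem5.1} hold. Condition (i) is assumed. Condition (ii) holds since $T$ is continuous everywhere, in particular at $p^*$. Condition (iii) is exactly assumption (ii) here. Hence $Tp^*=p^*$.

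Alternatively, one can argue directly, which avoids any subtlety in Theorem \ref{theorem5.1}'s reliance on (\ref{0.3}). If some $p_n$ is a fixed point, the sequence is eventually constant and $p^*=p_n$. Otherwise, by (i) the points $p_n,p_{n+1},p_{n+2},p_{n+3}$ are pairwise distinct. The computation in the proof of Theorem \ref{theorem3.1} (which used only (\ref{0.1}) on such quadruples, now valid on $X$) gives $a_n=d(p_n,p_{n+1})\le\gamma^{n/3-1}A\to0$. Continuity gives $p_{n_k+1}\to Tp^*$, while $d(p_{n_k},p_{n_k+1})\to0$ forces $p_{n_k+1}\to p^*$. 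Uniqueness of limits then gives $Tp^*=p^*$.

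For the bound on fixed points, the extended inequality applied to four distinct fixed points $u_1,\dots,u_4$ gives
\[
\sum d(u_i,u_{i+1})\le\alpha\sum d(u_i,u_{i+1}).
\]
Since the sum is positive and $\alpha<1/3$, this is a contradiction, so there are at most three fixed points.

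The main (mild) obstacle is the density extension: one must ensure the approximating points are pairwise distinct for large $k$, which follows from choosing $k$ so that each approximant lies within a quarter of the minimal mutual distance of $p,q,r,s$ from its limit. One must also check that no strictness is lost in passing to the limit; this is fine because (\ref{0.1}) is a non-strict inequality.
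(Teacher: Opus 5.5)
Your proposal is correct and follows the paper's route: extend (\ref{0.1}) from the dense set $Y$ to all of $X$ by approximation, using continuity of $T$ and of $d$, and then invoke Theorem \ref{theorem5.1} (your direct alternative just inlines that theorem's proof together with the estimate (\ref{0.3}), which uses no completeness). Approximating all four points simultaneously is a cleaner version of the paper's four-case split, and it also covers configurations such as $p,r\in Y$, $q,s\notin Y$ that the paper's ``without loss of generality'' passes over, since (\ref{0.1}) is invariant only under the dihedral symmetries of the cycle $p,q,r,s$.
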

\begin{proof}
\textbf{Case 1:}
Without loss of generality, suppose $p,q,r\in Y$ and $s\in X\setminus Y$. Since $Y$ is dense in $X$, there exists a sequence $\{s_n\}\subset Y$ such that $s_n\to s$. we may assume that all points $p,q,r,s_n$ are pairwise distinct. Applying (\ref{0.1}) to the points $p,q,r,s_n$ gives
\[
\begin{aligned}
&d(Tp,Tq) + d(Tq,Tr) + d(Tr,Ts_n) + d(Ts_n,Tp) \\
&\quad \le \alpha\big(d(p,q)+d(q,r)+d(r,s_n)+d(s_n,p)\big) \\
&\quad \quad + \lambda\big(d(p,Tp)+d(q,Tq)+d(r,Tr)+d(s_n,Ts_n)\big).
\end{aligned} 
\]
Since $T$ is continuous on $M$, we have $Ts_n\to Ts$. Also, by continuity of the metric $d$, we have
\[
d(r,s_n)\to d(r,s)~\text{and}~ d(s_n,p)\to d(s,p).
\]
using the triangle inequality,
\[
d(s_n,Ts_n) \le d(s_n,s) + d(s,Ts) + d(Ts,Ts_n),
\]
we obtain
\begin{align*}
&d(Tp,Tq) + d(Tq,Tr) + d(Tr,Ts) + d(Ts,Tp) \\
&\quad \le \alpha\big(d(p,q)+d(q,r)+d(r,s)+d(s,p)\big) \\
&\quad \quad + \lambda\big(d(p,Tp)+d(q,Tq)+d(r,Tr)+d(s_n,s) + d(s,Ts) + d(Ts,Ts_n)\big).
\end{align*}
Thus, taking the limit $n\to\infty$, we obtain $d(s_n,s)\rightarrow 0$ and $d(Ts,Ts_n)\rightarrow 0$. So (\ref{0.1}) holds.

\textbf{Case 2:}
Suppose $p,q\in Y$ and $r,s\in X\setminus Y$. Choose sequences $\{r_n\},\{s_n\}\subset Y$ such that $r_n\to r$ and $s_n\to s$, with all points $p,q,r_n,s_n$ pairwise distinct. Applying the same limiting argument as in case 1, we obtain the inequality (\ref{0.1}).

\textbf{Case 3:}
Suppose $p\in Y$ and $q,r,s\in X\setminus Y$. Choose sequences $\{q_n\},\{r_n\},\{s_n\}\subset Y$ such that $q_n\to q$, $r_n\to r$, $s_n\to s$, with all points $p,q_n,r_n,s_n$ pairwise distinct. Applying the limiting argument, we obtain the inequality (\ref{0.1}).

\textbf{Case 4:}
Suppose $p,q,r,s\in X\setminus Y$. Choose sequences $\{p_n\},\{q_n\},\{r_n\},\{s_n\}\subset Y$ such that $p_n\to p$, $q_n\to q$, $r_n\to r$, $s_n\to s$, with all $p_n, q_n, r_n, s_n$ pairwise distinct. Applying the limiting argument, we obtain the inequality (\ref{0.1}).

Thus, in all cases, (\ref{0.1}) holds for all distinct $p,q,r,s\in X$. Therefore $T$ is a GQCRR-type mapping on $X$.
By Theorem \ref{theorem5.1}, we conclude that $p^*$ is a fixed point of $T$.
\end{proof}
\begin{example}
Let \(X=(0,1]\) be equipped with the usual metric $d$. Define \(T:X\to X\) by
\[
T(x)=
\begin{cases}
\dfrac{x+1}{8}, & x\in(0,1),\\[6pt]
\dfrac{1}{8}, & x=1.
\end{cases}
\]
Then $T$ is a CRR-type mapping with $a=\frac{1}{8},\;b=0,\;c=\frac{1}{4}$. By Proposition \ref{prop1}, $T$ is a GQCRR-type mapping with $\alpha=a=\frac{1}{8}$ and $\lambda=b+c=\frac{1}{4}$. The sequence of iterates $p_0\in X$ converges to $\frac{1}{7}$ and $T$ is continuous at $\frac{1}{7}$. From Theorem \ref{theorem5.1}, $\frac{1}{7}$ is a fixed point of $T$.
\end{example}

\begin{example}
Let \(X=(0,1]\) with the usual metric $d$ and let \(Y=(0,1)\), which is dense in \(X\).
Define \(T:X\to X\) by
\[
T(x)=\frac{x+1}{8}\qquad \forall~x\in X.
\]
Then $T$ is a CRR-type mapping with $a=\frac{1}{8},\;b=c=\frac{1}{16}$. By Proposition \ref{prop1}, $T$ is a GQCRR-type mapping with $\alpha=a=\frac{1}{8}$ and $\lambda=b+c=\frac{1}{8}$. The sequence of iterates $p_0\in X$ converges to $\frac{1}{7}$ and $T$ is continuous at $\frac{1}{7}$. From Theorem \ref{theorem5.2}, $\frac{1}{7}$ is a fixed point of $T$.
\end{example}

\section*{Conclusion}
In this work, we have introduced and investigated a new class of mapping, termed as generalized quadrilateral Ćirić-Reich-Rus type mappings. This notion encompasses several well-known concepts, including Ćirić-Reich-Rus-type mappings, generalized Ćirić-Reich-Rus-type mappings, perimetric contraction on quadrilaterals, and Kannan-type perimetric contractions on quadrilaterals. We establish that in complete metric spaces, any GQCRR mapping devoid of periodic points of prime period 2 and 3 necessarily admits a fixed point, with the number of such fixed points being at most three. This non-uniqueness feature distinguishes our class from classical contraction principles while preserving the existence guarantee. A particularly significant finding is that while these mappings may exhibit discontinuities throughout their domain, they must necessarily be continuous at their fixed points, establishing an important regularity property.

Further, we have extended our results in two crucial directions. First, by incorporating asymptotic regularity, we relaxed the parameter constraints while preserving fixed point existence. Second, we established fixed point theorems in incomplete metric spaces. In one scenario, we assume continuity at the limit point and in another, we assume that the mapping is continuous on the entire space and that the contraction condition holds on a dense subset, thereby broadening the applicability of our framework. Several illustrative examples are provided to validate the theoretical results and demonstrate the independence of our new class from existing notions. This work opens up various new research avenues, such as extending these results to more general frameworks like semi-metric spaces via triangle functions, modular function spaces, or multivalued settings would represent natural progressions of this research direction.

\section*{Conflict of Interest}
The authors declare that there is no conflict of interest regarding the publication of this paper.

\end{document}